\DeclareMathSymbol{\subsetneqq}{\mathbin}{AMSb}{36}
\newcommand{\R}{\mathbb{R}}
\newcommand{\C}{\mathbb{C}}
\newcommand{\beq}{\begin{eqnarray}}
\newcommand{\eeq}{\end{eqnarray}}
\newcommand{\bq}{\begin{equation}}
\newcommand{\eq}{\end{equation}}
\newcommand{\beqn}{\begin{eqnarray*}}
\newcommand{\eeqn}{\end{eqnarray*}}
\newcommand{\bex}{\begin{exo}}
\newcommand{\eex}{\end{exo}}
\newcommand{\ben}{\begin{enumerate}}
\newcommand{\een}{\end{enumerate}}
\newtheorem{th1}{{\bf Theorem}}[section]
\newtheorem{thm}[th1]{{\bf Theorem}}
\newtheorem{lem}[th1]{{\bf Lemma}}
\newtheorem{prop}[th1]{{\bf Proposition}}
\newtheorem{rems}[th1]{\bf Remarks}
\newtheorem{defi}[th1]{\bf Definition}
\author[T. Saanouni and H. Hezzi]{Tarek Saanouni and Hanene Hezzi}
\address{Department of Mathematics, College of Sciences and Arts of Uglat Asugour, Qassim University, Buraydah, Kingdom of Saudi Arabia.}
\address{University of Tunis El Manar, Faculty of Science of Tunis, LR03ES04 partial differential Equations and applications, 2092 Tunis, Tunisia.}
\email{\sl t.saanouni@qu.edu.sa}
\email{\sl tarek.saanouni@ipeiem.rnu.tn}
\email{\sl hezzi$_{-}{82}$hanen@yahoo.fr}
\subjclass[2010]{35Q55}
\keywords{Fourth-order Schr\"odinger equation, inhomogeneous, non-radial, scattering.}
\title[IBNLS]{Scattering for a class of non-radial inhomogeneous bi-harmonic Hartree equations}
\date{\today}
\begin{document}
\begin{abstract}
This manuscript proves the energy scattering of global solutions to a repulsive fourth-order generalized Hartree equation with non-radial data in the inter-critical regime. This work uses a new approach due to Dodson-Murphy \cite{dm} and extends the previous work \cite{st4} by removing the spherically symmetric assumption on the data.
\end{abstract}
\maketitle
\vspace{ 1\baselineskip}
\renewcommand{\theequation}{\thesection.\arabic{equation}}
\section{Introduction}
This work treats the energy scattering theory for the following inhomogeneous focusing Choquard equation
\begin{equation}
\left\{
\begin{array}{ll}
i\dot u+\Delta^2  u-(I_\alpha*|\cdot|^b|u|^p)|x|^{b}|u|^{p-2}u=0 ;\\
u(0,.)=u_0.
\label{S}
\end{array}
\right.
\end{equation}

Here and hereafter $u:\R\times\R^N \to \C$, for a natural integer $N\geq5$. The singular inhomogeneous term is $|\cdot|^b$, for a certain $b<0$. The Riesz-potential is defined on $\R^N$ by 
$$I_\alpha:x\mapsto\frac{\Gamma(\frac{N-\alpha}2)}{\Gamma(\frac\alpha2)\pi^\frac{N}22^\alpha|x|^{N-\alpha}},\quad  0<\alpha<N.$$ 

The limiting case $b=0$ corresponds to the homogeneous fourth-order Schr\"odinger problem considered first in \cite{Karpman,Karpman 1} to take into account the role of small fourth-order dispersion terms in the propagation of intense laser beams in a bulk medium with a Kerr non-linearity.\\

The equation \eqref{S} is invariant under the scaling 
$$u_\lambda=\lambda^\frac{4+2b+\alpha}{2(p-1)}u(\lambda^4\,\cdot,\lambda\,\cdot),\quad\lambda>0.$$
The homogeneous Sobolev norm gives the critical Sobolev index denoted by $s_c$ as follows
$$\|u_\lambda\|_{\dot H^s}=\lambda^{s-(\frac N2-\frac{4+2b+\alpha}{2(p-1)})}\|u(\lambda^4\,\cdot)\|_{\dot H^s}:=\lambda^{s-s_c}\|u(\lambda^4\,\cdot)\|_{\dot H^s},$$
In this note, one considers the inter-critical regime $0<s_c<2$, which corresponds to the mass super-critical and energy sub-critical case.\\

Let us recall give a brief literature about the inhomogeneous bi-harmonic non-linear Schr\"odinger equation (IBNLS). The finite time blow-up of solutions to the IBNLS for negative energy with a source term $|x|^{-2}|u|^{\frac4N}u$ was considered in \cite{cow}. The local well-posedness in the energy space was treated in \cite{gp}. See also \cite{lz}, for the well-posedness in $H^s$, $0<s\leq2$ and \cite{cgp} for existence of global and non-global solutions in $\dot H^{s_c}\cap \dot H^2$. The case of a non-local source term was considered by the author in \cite{st2}. The scattering of global spherically symmetric solutions under the ground state threshold was established recently by the author \cite{st4}. Moreover, the scattering without any radial assumption for the IBNLS with a local source term was proved very recently \cite{lcg}.\\

It is the aim of this note to extend \cite{st4} to the non-radial regime and \cite{lcg} to the generalized Hartree equation. The challenge of this work is to deal with the non-local source term with use of a Hardy-Littlewood-Sobolev estimate. The main ingredient here is the use of the decay of the inhomogeneous term $|\cdot|^b$ instead of the spherically symmetric assumption. So, the scattering in the limiting case $b=0$ is not a consequence of this manuscript. \\   
 
 The rest of the note is organized as follows. In section 2, one gives the main result and some useful estimates. Section three contains a proof of a Morawetz identity. In section four one proves a scattering criterion. The last section proves the main Theorem.\\

Here and hereafter, $C$ denotes a constant which may vary from line to another. Denote the Lebesgue space $L^r:=L^r({\R^N})$ with the usual norm $\|\cdot\|_r:=\|\cdot\|_{L^r}$ and $\|\cdot\|:=\|\cdot\|_2$. The inhomogeneous Sobolev space $H^2:=H^2({\R^N})$ is endowed with the norm 
$$ \|\cdot\|_{H^2} := \Big(\|\cdot\|^2 + \|\Delta\cdot\|^2\Big)^\frac12.$$
Let us denote also $C_T(X):=C([0,T],X)$ and $X_{\bf r}$ the set of radial elements in $X$. Moreover, for an eventual solution to \eqref{S}, $T^*>0$ denotes it's lifespan. Finally, $x^+$ is a real numbers near to $x$ satisfying $x^+>x$.

\section{Background and main results}
This section contains the contribution of this paper and some standard estimates needed in the sequel.
\subsection{Preliminary}
Take for $R>0$, $\psi_R:=\psi(\frac\cdot R)$, where $0\leq\psi\leq1$ is a radial smooth function satisfying
$$\psi\in C_0^\infty(\R^N),\quad supp(\psi)\subset \{|x|<1\}, \quad\psi=1\,\,\mbox{on}\,\,\{|x|<\frac12\}.$$
The mass-critical and energy-critical exponents for the Choquard problem \eqref{S} are
$$p_*:=1+\frac{\alpha+4+2b}N\quad\mbox{and}\quad p^*=1+\frac{4+2b+\alpha}{N-4}.$$
Solutions of the Choquard problem \eqref{S} satisfy the conservation of the mass and the energy
\begin{align*}
M[u]&:=\|u\|^2;\\
E[u]&:=\|\Delta u\|^2-\frac1p\int_{\R^N}(I_\alpha*|\cdot|^b|u|^p)|x|^b|u|^p\,dx.
\end{align*}
Let $\phi$ a ground state solution to the the elliptic problem
$$\phi+\Delta^2\phi-(I_\alpha*|\cdot|^b|\phi|^p)|x|^b|\phi|^{p-2}\phi=0,\quad0\neq\phi\in H^2,$$
and $u$ a solution to \eqref{S}. The following scale invariant quantities describe the dichotomy of global/non-global existence of solutions \cite{guo}.
\begin{gather*}
\mathcal{ME}[u]:=\Big(\frac{E[u]}{E[\phi]}\Big)\Big(\frac{M[u]}{M[\phi]}\Big)^\frac{2-s_c}{s_c};\\
\mathcal{MG}[u]:=\Big(\frac{\|\Delta u\|}{\|\Delta\phi\|}\Big)\Big(\frac{\|u\|}{\|\phi\|}\Big)^\frac{2-s_c}{s_c}.
\end{gather*}
The local well-posedness of the above problem \eqref{S} in the energy space for $2\leq p<p^*$ was proved in \cite{st2} under the assumptions denoted for simplicity $(N,\alpha,b)$ satisfies $(\mathcal C)$ if, $0<\alpha<N$ and $\max\{-(N+\alpha),-4(1+\frac\alpha N),N-8-\alpha\}<2b<0$ and $[N\geq5$ or $3\leq N\leq 4$ and $2\alpha+4b+N>0]$. Moreover, the global existence versus finite time blow-up of energy solutions under the ground state threshold was obtained in \cite{st4}. Precisely,
\begin{thm}\label{sctr1}
Let  $(N,\alpha,b)$ satisfying $(\mathcal C)$ and $\max\{p_*,x_\alpha\}<p<p^*$ such that $p\geq\max\{2,\frac32+\frac\alpha N\}$. Let $u_0\in H^2_{\bf r}$ satisfying
\begin{equation}\label{ss}
\max\Big\{\mathcal{ME}[u_0],\mathcal{MG}[u_0]\Big\}<1.
\end{equation}
 Take $u\in C_{T^*}(H^2_{\bf r})$ be a maximal solution to \eqref{S}. Then, $u$ is global and there exists $u_\pm\in H^2$ such that
$$\lim_{t\to\pm\infty}\|u(t)-e^{it\Delta^2}u_\pm\|_{H^2}=0.$$
\end{thm}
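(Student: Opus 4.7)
The plan is to adapt the Dodson--Murphy road map to the generalized Hartree setting: I would first establish a scattering criterion of the form ``exterior spatial smallness on an interval of controlled length implies scattering,'' and then verify its hypothesis by combining a truncated virial/Morawetz inequality with the coercivity provided by the threshold assumption \eqref{ss}. Throughout, the inhomogeneity $|x|^b$ with $b<0$ is exploited to suppress contributions from the exterior region $|x|>R$, thereby replacing the role played by spherical symmetry in the earlier work \cite{st4}.

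First I would observe that conservation of mass and energy combined with the sharp Gagliardo--Nirenberg-type inequality built around the ground state $\phi$ yields, under \eqref{ss}, a uniform bound $\sup_{t\in\R}\|\Delta u(t)\|\le\eta\|\Delta\phi\|$ for some $\eta<1$. This is the standard Kenig--Merle variational dichotomy, and it gives global existence together with a uniform $H^2$ bound on $u$. Next, the content of Section 4 would be a scattering criterion asserting that any global solution with $\sup_t\|u\|_{H^2}\le A$ satisfying an exterior smallness condition
\begin{equation*}
\liminf_{n\to\infty}\int_{t_n}^{t_n+T}\!\!\int_{|x|>R}|u(t,x)|^{2p}\,dx\,dt\le\varepsilon
\end{equation*}
for some $t_n\to\infty$ and constants $T=T(A),\varepsilon=\varepsilon(A),R=R(A)$, must scatter in $H^2$. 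The proof proceeds via Duhamel, Strichartz estimates for $e^{it\Delta^2}$, and the Hardy--Littlewood--Sobolev inequality applied to the nonlocal term, with the factor $|x|^b$ used to control the interior region.

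The crux lies in Section 3: using a radial cut-off weight $a(x)$ built from $\psi_R$ and $|x|^2$, one computes $\partial_t^2\!\int a|u|^2\,dx$ to obtain a Morawetz identity which, after integration over $I=[0,T]$, yields a schematic inequality
\begin{equation*}
\int_I\!\!\int (I_\alpha*|\cdot|^b|u|^p)|x|^b|u|^p\,dx\,dt \lesssim R\sup_{t\in I}\|u(t)\|_{H^2}^2 + \mathrm{err}(R),
\end{equation*}
where the error comes from derivatives of $\psi_R$. The coercivity following from \eqref{ss} turns the nonlocal potential energy into a positive quantity controlled from below by an exterior integral on a suitable scale; dividing by $T$ and letting $T\to\infty$ then forces the existence of a sequence $t_n\to\infty$ along which the exterior integral becomes small, verifying the hypothesis of the scattering criterion and closing the argument.

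The main difficulty is handling the nonlocal Hartree interaction inside the Morawetz identity. Unlike for a local power nonlinearity, the time derivative of the virial brings in a Riesz convolution with cross terms supported both inside and outside $\{|x|<R\}$, and these must be estimated by Hardy--Littlewood--Sobolev in a way compatible with the truncation. The decisive gain---and the reason the non-radial argument works---is that the error terms generated by $\nabla\psi_R$ and by the off-diagonal cross convolutions come weighted by $|x|^b$ evaluated near $|x|\sim R$, providing a factor $R^b\to0$ which can be made arbitrarily small. This is the precise sense in which the inhomogeneity substitutes for radial symmetry, and it is also the reason that the limiting case $b=0$ is excluded from this scheme.
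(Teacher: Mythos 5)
Your overall roadmap is the right one and matches the paper's (and \cite{st4}'s) Dodson--Murphy scheme: variational coercivity from \eqref{ss}, a truncated virial/Morawetz inequality, and a scattering criterion whose hypothesis is verified by the Morawetz output. (Note that the statement you are proving is the \emph{radial} theorem recalled from \cite{st4}; by leaning on the decay of $|x|^b$ rather than on radial Sobolev embeddings you are in effect attempting the paper's stronger non-radial Theorem~\ref{sctr}, which is a legitimate route.) However, there is a genuine inversion at the heart of your argument. The scattering criterion in the paper (Proposition~\ref{crt}) assumes \emph{interior} mass smallness, $\liminf_{t\to+\infty}\int_{|x|<R}|u(t,x)|^2\,dx<\epsilon$, and the Morawetz estimate (Proposition~\ref{bnd1'}) delivers exactly that: the coercivity of Lemma~\ref{bnd} is applied to $\psi_R u$, so the quantity controlled sublinearly in $T$ is $\int_0^T\|u\|^{2p}_{L^{2Np/(N+\alpha+2b)}(|x|<R)}\,dt$, whence evacuation of mass from every ball along a sequence $t_n\to\infty$. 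Your criterion instead hypothesizes \emph{exterior} smallness, $\liminf\int\!\!\int_{|x|>R}|u|^{2p}\le\varepsilon$, and claims the Morawetz argument produces exterior decay. That cannot work: since $b<0$, the weight $|x|^b$ is small at infinity and singular at the origin, so the exterior contribution to the weighted potential energy is bounded by $R^b\|u\|_{H^2}^{2p}$ for \emph{any} bounded solution, and an exterior-smallness hypothesis is essentially vacuous (it would be satisfied by a non-scattering bound state, since an $H^2$ profile automatically has small $L^{2p}$ tail outside a large ball). The nontrivial dynamical information one must extract is precisely that the solution leaves every fixed ball, and that is what the truncated virial with the cutoff at scale $R=T^{1/(1-b)}$ provides.

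Relatedly, your sentence ``the factor $|x|^b$ is used to control the interior region'' has the mechanism backwards. In the Duhamel/Strichartz step the nonlinearity is split as $\psi_R\mathcal N+(1-\psi_R)\mathcal N$: the exterior piece $(1-\psi_R)\mathcal N$ is killed by the decay of $|x|^b$ on $\{|x|>R/2\}$ (factors $R^{N+b\mu}$ with $N+b\mu<0$), while the interior piece is small because of the evacuated interior mass; near the origin the weight is a singularity to be absorbed by Hardy--Littlewood--Sobolev (requiring $\||x|^b\|_{L^{\mu_1}(|x|<1)}<\infty$), not a help. Once you swap interior and exterior throughout --- state the criterion with interior mass smallness, derive interior space-time decay from the Morawetz identity, and use $R^b$ only for the exterior error terms --- your outline aligns with the paper's proof.
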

\begin{rems}
Note that
\begin{enumerate}
\item[1.]
$x_\alpha$ is the positive root of the polynomial
$$(X-1)(2X-1)-\frac{4+2b+\alpha}{N-4};$$
\item[2.]
the proof follows a new approach due to \cite{dm};
\item[3.]
the spherically symmetric assumption is essential in the used method.
\end{enumerate}
\end{rems}
The following variational estimates \cite{st4} are needed in the proof of the scattering of global solutions to the focusing Choquard problem \eqref{S}. 
\begin{lem}\label{bnd}
Take $(N,\alpha,b)$ satisfying $(\mathcal C)$ and $p_*<p<p^*$ such that $p\geq2$. Let $u_0\in H^2$ satisfying \eqref{ss} and $u\in C(\R,H^2)$ be the solution to \eqref{S}. Then, 
\begin{enumerate}
\item[1.]
there exists $0<\delta<1$ such that
$$\max\Big\{\sup_{t\in\R} \mathcal{ME}[u(t)],\sup_{t\in\R}\mathcal{MG}[u(t)]\Big\}<1-\delta.$$
\item[2.]
There exists $R_0:=R_0(\delta,M(u),\phi)>0$ such that for any $R>R_0$,
$$\sup_{t\in\R}\|\psi_R u(t)\|^{2-s_c}\|\Delta(\psi_Ru(t))\|^{s_c}<(1-\delta)\|\phi\|^{2-s_c}\|\Delta\phi\|^{s_c}.$$
Moreover, there exists $\delta'>0$ such that
$$\|\Delta(\psi_Ru)\|^2-\frac B{2p}\int_{\R^N}(I_\alpha*|\cdot|^b|\psi_Ru|^p)|x|^b|\psi_Ru|^p\,dx\geq\delta'\|\psi_Ru\|_{\frac{2Np}{N+\alpha+2b}}^{2p}.$$ 
\end{enumerate}
\end{lem}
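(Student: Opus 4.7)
The plan is to handle the three inequalities in turn. For part (1), the quantity $\mathcal{ME}[u(t)]$ is literally constant in $t$ since both $M[u(t)]$ and $E[u(t)]$ are conserved along the flow of \eqref{S}, so $\sup_t\mathcal{ME}[u(t)]=\mathcal{ME}[u_0]<1$ is immediate from \eqref{ss}. The analogous uniform bound on $\mathcal{MG}[u(t)]$ requires a standard bootstrap: the sharp Gagliardo--Nirenberg--Hartree inequality (extremized by the ground state $\phi$) converts the identity $E[u(t)]=E[u_0]$ into a polynomial inequality in $\mathcal{MG}[u(t)]$, which, combined with continuity of $t\mapsto\|\Delta u(t)\|$ and the strict inequality at $t=0$, rules out $\mathcal{MG}[u(t)]$ ever reaching the threshold $1$. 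Taking $\delta>0$ to be the smaller of the two resulting gaps completes the first part.

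For the first estimate of part (2), smooth truncation by $\psi_R$ is a small $H^2$-perturbation when $R$ is large. The bound $\|\psi_R u(t)\|\leq\|u(t)\|$ is immediate, while expanding
\[
\Delta(\psi_R u)=\psi_R\Delta u+2\nabla\psi_R\cdot\nabla u+(\Delta\psi_R)u,
\]
and using $\|\nabla\psi_R\|_\infty\lesssim R^{-1}$, $\|\Delta\psi_R\|_\infty\lesssim R^{-2}$ together with the Gagliardo--Nirenberg interpolation $\|\nabla u\|\lesssim\|u\|^{1/2}\|\Delta u\|^{1/2}$, yields an estimate of the form
\[
\|\Delta(\psi_R u(t))\|^2\leq\|\Delta u(t)\|^2+o_R(1),
\]
with the $o_R(1)$ error uniform in $t$ thanks to part (1) and mass conservation. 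Plugging these into $\|\psi_R u\|^{2-s_c}\|\Delta(\psi_R u)\|^{s_c}$ and choosing $R_0$ large enough to absorb the error into the $\delta$-gap from part (1) gives the sub-threshold bound for the truncated function.

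The coercivity estimate is then established in two moves. First, the sharp Gagliardo--Nirenberg--Hartree inequality and the Pohozaev identity for $\phi$, combined with the sub-threshold control on $\|\psi_R u\|^{2-s_c}\|\Delta(\psi_R u)\|^{s_c}$ just proved, give
\[
\frac{B}{2p}\int_{\R^N}(I_\alpha\ast|\cdot|^b|\psi_R u|^p)|x|^b|\psi_R u|^p\,dx\leq(1-\delta)^{p-1}\|\Delta(\psi_R u)\|^2,
\]
so the Nehari-type quantity on the left side of the claimed inequality dominates $\eta\|\Delta(\psi_R u)\|^2$ for some fixed $\eta>0$. Second, a Gagliardo--Nirenberg interpolation $\|w\|_q\lesssim\|w\|^{1-\mu}\|\Delta w\|^\mu$ with $q=\tfrac{2Np}{N+\alpha+2b}$, together with the uniform bounds on $\|u_0\|$ and $\|\Delta u(t)\|$ from part (1), gives $\|\psi_R u\|_q^{2p}\lesssim\|\Delta(\psi_R u)\|^2$; chaining the two estimates produces the desired $\delta'>0$. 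The main obstacle I anticipate is managing the non-local Hartree term under truncation: unlike the local case of \cite{lcg}, the convolution $I_\alpha\ast|\cdot|^b|\psi_R u|^p$ is not supported in $\{|x|\lesssim R\}$, so cutoffs do not localize the nonlinearity; the decay of the inhomogeneous weight $|x|^b$ with $b<0$, as stressed in the introduction, together with a weighted Hardy--Littlewood--Sobolev (Stein--Weiss) estimate, is what replaces the radial hypothesis of \cite{st4}.
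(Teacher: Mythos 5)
The paper gives no proof of this lemma --- it simply imports it from \cite{st4} --- and your outline reproduces the standard variational argument used there: conservation laws plus the sharp Gagliardo--Nirenberg--Hartree inequality and a continuity argument for part (1), the cutoff expansion with $O(R^{-1})$ errors for the truncated sub-threshold bound, and the Pohozaev normalization followed by interpolation (where $2p\mu>2$ precisely because $s_c>0$) for the coercivity. This is essentially the same approach, and your closing remark correctly identifies the role of the weighted Hardy--Littlewood--Sobolev estimate in handling the non-local term.
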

\subsection{Main result}
The main goal of this manuscript is to prove the following scattering result.
\begin{thm}\label{sctr}
Let  $(N,\alpha,b)$ satisfying $(\mathcal C)$ and $p_*<p<p^*$ such that $p\geq2$ and $N\geq5$. Let $u_0\in H^2$ satisfying \eqref{ss} and $u\in C_{T^*}(H^2)$ be the maximal solution to \eqref{S}. Then, $u$ is global and there exists $u_\pm\in H^2$ such that
$$\lim_{t\to\pm\infty}\|u(t)-e^{it\Delta^2}u_\pm\|_{H^2}=0.$$
\end{thm}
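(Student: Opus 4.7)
The plan is to follow the Dodson-Murphy scheme adapted to the fourth-order Hartree setting, and to use the decay of the inhomogeneous factor $|x|^b$ at spatial infinity (with $b<0$) as the substitute for the radial Strauss estimate used in \cite{st4}. Global existence is the first and easiest step: Lemma \ref{bnd}(1) together with mass conservation yields $\sup_{t\in\R}\|u(t)\|_{H^2}\le C(u_0,\phi)$, so $u$ extends to $\R$ and it suffices to produce the scattering states $u_\pm$.

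The second step is a Dodson-Murphy scattering criterion of the form announced in Section~4: there exist $\varepsilon_0>0$ and $R_0>0$, depending only on the uniform $H^2$ bound, such that if
\begin{equation*}
\liminf_{T\to\infty}\frac{1}{T}\int_{T}^{2T}\|u(t)\|^{q}_{L^{r}(\{|x|>R_0\})}\,dt<\varepsilon_0
\end{equation*}
for some admissible Strichartz pair $(q,r)$, then $u$ scatters in $H^2$. The proof is a Duhamel/Strichartz perturbation argument along a time sequence $t_n\to\infty$: the convolution $I_\alpha\ast(|\cdot|^{b}|u|^{p})$ is estimated via Hardy-Littlewood-Sobolev and decomposed into contributions from $\{|x|>R_0\}$, where the smallness assumption absorbs the nonlinear term, and $\{|x|\le R_0\}$, which is dispersed by the linear biharmonic group. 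The key point is that on the exterior, $|x|^b\le R_0^{b}$ yields a smallness factor with $b<0$, replacing the Strauss bound that is unavailable in the non-radial regime.

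The third step is the truncated Morawetz estimate to be proved in Section~3. Using a radial weight $a_R$ that behaves like $|x|$ on $\{|x|\le R\}$ and flattens for $|x|\ge 2R$, one studies the fourth-order Morawetz action associated with $a_R$ and differentiates in time. After integration by parts, the identity produces a positive leading term that is coercive on the truncated function $\psi_R u$ thanks to Lemma~\ref{bnd}(2), minus remainder terms concentrated on the annulus $\{R\le|x|\le 2R\}$ and involving the Hartree potential. Integrating over $[T,2T]$, dividing by $T$ and letting $T,R\to\infty$ along a suitable sequence forces the exterior Strichartz norm of $u$ to be arbitrarily small, which feeds Step~2.

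The principal obstacle lies in Step~3, namely the control of the non-local Hartree remainder in the Morawetz estimate without spherical symmetry. One must apply Hardy-Littlewood-Sobolev to a weighted convolution whose arguments live on different spatial slabs and exploit the gain $|x|^b\le R^b$ with $b<0$ on the far region to make the error $o(1)$ as $R\to\infty$; the admissibility conditions $p_*<p<p^*$, $p\ge 2$ and the assumption $(\mathcal{C})$ enter precisely to close this estimate. This is also the place where the inhomogeneity $|x|^b$ substitutes for the radial Strauss inequality of \cite{st4}, which is why the homogeneous case $b=0$ is excluded. Once the quantitative Morawetz bound is obtained, combining it with the criterion of Step~2 yields Theorem \ref{sctr}.
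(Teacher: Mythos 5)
Your overall skeleton (global existence from Lemma \ref{bnd}, a truncated virial--Morawetz spacetime bound, and a Dodson--Murphy scattering criterion, with the decay of $|x|^b$ replacing radiality) matches the paper, but the roles of the interior and exterior regions are inverted in your Steps~2 and~3, and this is a genuine gap. Since $b<0$, the weight $|x|^b$ is \emph{large near the origin} and \emph{small at infinity}: the exterior contribution of the nonlinearity is controlled for free by $|x|^b\le R^{b}\to0$ (this is exactly how the paper bounds the $(1-\psi_R)\mathcal N$ piece, choosing exponents $\mu,\mu_2$ with $N+b\mu<0$, $N+b\mu_2<0$), whereas the dangerous region is the ball $\{|x|<R\}$ where the coefficient is singular. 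Accordingly, the paper's criterion (Proposition \ref{crt}) hypothesizes $\liminf_{t\to\infty}\int_{|x|<R}|u(t,x)|^2\,dx<\epsilon$, i.e.\ smallness of the \emph{interior} mass, and the proof uses the almost-conservation $|\frac{d}{dt}\int\psi_R|u|^2|\lesssim R^{-1}$ to propagate $\|\psi_R u(t)\|\lesssim\varepsilon$ over a time window, which is what makes $\|\psi_R\mathcal N\|_{L^{d'}L^{r'}}\lesssim\varepsilon^{\eta}$. Your version assumes smallness on $\{|x|>R_0\}$ (where no hypothesis is needed) and asserts that the interior piece is ``dispersed by the linear biharmonic group'' --- but without a smallness input near the origin there is no mechanism to close the bootstrap there, so your Step~2 as stated does not go through.

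The same inversion appears in Step~3: the virial--Morawetz estimate must output smallness of an \emph{interior} norm, not an exterior one. The paper proves $\int_0^T\|u(t)\|_{L^{\frac{2Np}{N+\alpha+2b}}(|x|<R)}^{2p}\,dt\le CT^{1/(1-b)}$ with $1/(1-b)<1$, whence sublinear growth forces $\int_{|x|<R}|u(t_n)|^2\,dx\to0$ along a sequence (Lemma \ref{evac1}), which is precisely the hypothesis of Proposition \ref{crt}. Note also that the coercivity via Lemma \ref{bnd}(2) requires the weight to be $\sim|x|^2/2$ (virial) on $\{|x|\le R\}$, not $\sim|x|$ as you wrote, so that $\partial_{jk}a=\delta_{jk}$ produces the full quantity $\|\Delta(\psi_R u)\|^2-\frac{B}{2p}\int(I_\alpha*|\cdot|^b|\psi_Ru|^p)|x|^b|\psi_Ru|^p\ge\delta'\|\psi_Ru\|_{\frac{2Np}{N+\alpha+2b}}^{2p}$. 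Once these two corrections are made --- interior-mass criterion plus interior evacuation from the truncated virial identity --- your outline coincides with the paper's proof.
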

\begin{rems}
Note that
\begin{enumerate}
\item[1.]
the global existence of solutions under the assumption \eqref{ss} was proved in \cite{st2};
\item[2.]
the proof follows a new approach due to \cite{dm} and avoids the concentration-compactness method introduced by \cite{km};
\item[3.]
the main novelty here is the removal of the spherically symmetric of the data;
\item[4.]
one exploits the decay of the inhomogeneous term $|\cdot|^b$ of the source term, instead of the spherically symmetric assumption;
\item[5.]
the condition $N\geq5$ is used in the scattering criteria;
\item[6.]
it seems that the scattering for non-radial data in lower dimensions still remains open;
\item[7.]
in the limiting case $b=0$, it seems that the scattering for non-radial data is still open.
\end{enumerate}
\end{rems}
\subsection{Useful estimates}
Let us gather some classical tools needed in the sequel.
\begin{defi}\label{adm}
Take $N\geq1$ and $s\in[0,2)$. A couple of real numbers $(q,r)$ is said to be $s$-admissible $($admissible for $0$-admissible$)$ if 
$$\frac{2N}{N-2s}\leq r<\frac{2N}{N-4},\quad2\leq q,r\leq\infty\quad\mbox{and}\quad N(\frac12-\frac1r)=\frac4q+s.$$
Denote the set of $s$-admissible pairs by $\Gamma_s$ and $\Gamma:=\Gamma_0$. If $I$ is a time slab, one denotes the Strichartz spaces
$$S^s(I):=\cap_{(q,r)\in\Gamma_s}L^q(I,L^r).$$
\end{defi}
Recall the Strichartz estimates \cite{bp,guo}.
\begin{prop}\label{prop2}
Let $N \geq 1$, $0\leq s<2$ and $t_0\in I\subset \R$, an interval. Then,
\begin{enumerate}
\item[1.]
$\sup_{(q,r)\in\Gamma}\|u\|_{L^q(I,L^r)}\lesssim\|u(t_0)\|+\inf_{(\tilde q,\tilde r)\in\Gamma}\|i\dot u+\Delta^2 u\|_{L^{\tilde q'}(I,L^{\tilde r'})}$;
\item[2.]
$\sup_{(q,r)\in\Gamma}\|\Delta u\|_{L^q(I,L^r)}\lesssim\|\Delta u(t_0)\|+\|i\dot u+\Delta^2 u\|_{L^2(I,\dot W^{1,\frac{2N}{2+N}})}, \quad\forall N\geq3$;
\item[3.]
$\sup_{(q,r)\in\Gamma_s}\|u\|_{L^q(I,L^r)}\lesssim\|u(t_0)\|_{\dot H^s}+\inf_{(\tilde q,\tilde r)\in\Gamma_{-s}}\|i\dot u+\Delta^2 u\|_{L^{\tilde q'}(I,L^{\tilde r'})}$.
\end{enumerate}
\end{prop}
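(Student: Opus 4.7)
My plan is to derive the three Strichartz estimates from a single dispersive decay bound for the free biharmonic propagator $U(t)=e^{it\Delta^2}$ combined with the abstract Keel-Tao framework, adapting the classical Schr\"odinger arguments to the fourth-order setting. First, I would establish the dispersive bound. The Schwartz kernel of $U(t)$ is
$$K(t,x)=(2\pi)^{-N}\int_{\R^N}e^{i(x\cdot\xi-t|\xi|^4)}\,d\xi,$$
and the parabolic rescaling $\xi=|t|^{-1/4}\eta$ reveals the scaling identity $K(t,x)=|t|^{-N/4}K(\mathrm{sgn}(t),|t|^{-1/4}x)$. Since the profile $K(\pm1,y)=\int e^{i(y\cdot\eta\mp|\eta|^4)}\,d\eta$ is uniformly bounded in $y\in\R^N$ --- a van der Corput/oscillatory-integral bound applied to the phase $y\cdot\eta\mp|\eta|^4$, which is non-degenerate near its stationary point $\eta_*$ solving $y=\pm4|\eta_*|^2\eta_*$ --- one obtains the dispersive decay $\|U(t)\|_{L^1\to L^\infty}\lesssim|t|^{-N/4}$. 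Combined with the $L^2$-unitarity of $U(t)$, this is the standard input to the Strichartz machinery.

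Next, I would invoke the Keel-Tao theorem with dispersive exponent $\sigma=N/4$. This produces, for every pair $(q,r)$, $(\tilde q,\tilde r)$ satisfying $\tfrac{2}{q}+\tfrac{N/2}{r}=\tfrac{N}{4}$, the homogeneous Strichartz bound $\|U(\cdot)f\|_{L^qL^r}\lesssim\|f\|_{L^2}$ and the Duhamel inhomogeneous counterpart $\|\int_{t_0}^\cdot U(\cdot-s)F(s)\,ds\|_{L^qL^r}\lesssim\|F\|_{L^{\tilde q'}L^{\tilde r'}}$. The constraint rearranges exactly to $N(\tfrac{1}{2}-\tfrac{1}{r})=\tfrac{4}{q}$, i.e.\ membership in $\Gamma$ of Definition \ref{adm}. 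Applying these bounds to the Duhamel representation of the solution of $(i\partial_t+\Delta^2)u=F$ produces item 1.

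For item 3, I would use the reduction $u=|\nabla|^{-s}v$: the commutation $[U(t),|\nabla|^s]=0$, combined with the Sobolev embedding $\||\nabla|^{-s}v\|_{L^r}\lesssim\|v\|_{L^{\bar r}}$ where $\tfrac{1}{\bar r}=\tfrac{1}{r}+\tfrac{s}{N}$, translates the $s$-admissibility of $(q,r)$ for $u$ into the $0$-admissibility of $(q,\bar r)$ for $v$. Item 1 applied to $v$ then produces the initial norm $\|u(t_0)\|_{\dot H^s}$, while duality at regularity level $\dot H^{-s}$ places the inhomogeneous dual pair in $\Gamma_{-s}$. For item 2, I would apply item 1 to $\Delta u$, which solves $(i\partial_t+\Delta^2)(\Delta u)=\Delta F$, and invoke a Kato-type local smoothing gain specific to the biharmonic propagator (compare \cite{bp}) to convert $\|\Delta F\|_{L^{\tilde q'}L^{\tilde r'}}$ for the admissible choice $(\tilde q,\tilde r)=(2,\tfrac{2N}{N-4})$ into $\|F\|_{L^2\dot W^{1,2N/(N+2)}}$; the restriction $N\geq3$ is what ensures the dual exponent $\tfrac{2N}{N+2}>1$.

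The main technical obstacle is the uniform boundedness of the oscillatory profile $K(\pm1,y)$ underpinning the dispersive estimate: unlike the Schr\"odinger case where the kernel is an explicit Gaussian, the biharmonic kernel requires a careful van der Corput analysis, accounting for the non-degenerate stationary point (where the Hessian of $|\eta|^4$ is of size $|\eta_*|^2\sim|y|^{2/3}$) as well as for large $|y|$, where higher-order integration by parts yields additional decay. Once this dispersive estimate is in hand, the remaining Strichartz deductions for items 1, 2, and 3 are essentially routine adaptations of the classical second-order theory.
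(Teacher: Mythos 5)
The paper does not prove Proposition \ref{prop2} at all: it is recalled from \cite{bp,guo}, so any argument you give is necessarily "different from the paper." Your route for items 1 and 3 is the standard and correct one: the kernel rescaling $K(t,x)=|t|^{-N/4}K(\mathrm{sgn}(t),|t|^{-1/4}x)$ together with uniform boundedness of the profile (Ben-Artzi--Koch--Saut type bounds) gives $\|e^{it\Delta^2}\|_{L^1\to L^\infty}\lesssim |t|^{-N/4}$, Keel--Tao with $\sigma=N/4$ reproduces exactly the set $\Gamma$ of Definition \ref{adm}, and the passage to $\Gamma_s/\Gamma_{-s}$ via $u=|\nabla|^{-s}v$, Sobolev embedding and duality (plus Christ--Kiselev for the retarded integral, which you should mention) is the usual derivation of item 3.

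Item 2, however, has a genuine gap as you have set it up. Applying item 1 to $\Delta u$ with the dual pair of $(\tilde q,\tilde r)=(2,\tfrac{2N}{N-4})$ produces the source norm $\|\Delta F\|_{L^2(I,L^{2N/(N+4)})}$, and converting this into $\|F\|_{L^2(I,\dot W^{1,2N/(N+2)})}=\|\nabla F\|_{L^2(I,L^{2N/(N+2)})}$ would require $\|\nabla G\|_{L^{2N/(N+4)}}\lesssim\|G\|_{L^{2N/(N+2)}}$ with $G=\nabla F$, i.e.\ a \emph{reverse} Sobolev inequality (the exponents are Sobolev-conjugate, but the derivative sits on the wrong side). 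No "conversion" of an admissible inhomogeneous norm can produce the one-derivative gain that item 2 asserts. The actual mechanism, which is the content of the cited result in \cite{bp} (see also \cite{guo}), is to leave the admissible scale entirely: because the decay rate $|t|^{-N/4}$ of the fourth-order flow is stronger than its scaling requires, one has the non-sharp-admissible estimate $\|e^{it\Delta^2}g\|_{L^2(I,L^{2N/(N-2)})}\lesssim\|g\|_{\dot H^{-1}}$ (here $\tfrac4q-N(\tfrac12-\tfrac1r)=1$ is the derivative gain, and $N\geq3$ is needed for the exponent $\tfrac{2N}{N-2}$ to make sense). Dualizing gives $\|\int_I e^{-is\Delta^2}F(s)\,ds\|_{\dot H^{1}}\lesssim\|F\|_{L^2(I,L^{2N/(N+2)})}$, and composing with the homogeneous estimate for $\Gamma$ and the Christ--Kiselev lemma yields $\|\Delta\int_{t_0}^{t}e^{i(t-s)\Delta^2}F\,ds\|_{L^qL^r}\lesssim\|\nabla F\|_{L^2(I,L^{2N/(N+2)})}$, which is item 2. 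Your appeal to a "Kato-type local smoothing gain" points at the right phenomenon and the right reference, but the derivation you sketch would not close; it needs to be replaced by the exotic-pair/duality argument above.
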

Let us recall a Hardy-Littlewood-Sobolev inequality \cite{el,sw}.
\begin{lem}\label{hls}
Take $N\geq1$.
\begin{enumerate}
\item[1.]
Let $0 <\lambda < N$ and $1<r,s<\infty$ satisfying $2=\frac1r +\frac1s +\frac\lambda N $. Thus,
$$\int_{\R^N\times\R^N} \frac{u(x)v(y)}{|x-y|^\lambda}\,dx\,dy\leq C_{N,s,\lambda}\|u\|_{r}\|v\|_{s},\quad\forall u\in L^r,\,\forall v\in L^s.$$
\item[2.]
Let $0 <\alpha < N$ and $1<r,s,q<\infty$ satisfying $1+\frac\alpha N=\frac1q+\frac1r+\frac1s$. Thus,
$$\|(I_\alpha*u)v\|_{r'}\leq C_{N,s,\alpha}\|u\|_{s}\|v\|_{q},\quad\forall u\in L^s, \,\forall v\in L^q.;$$
\item[3.]
Let $0 <\alpha < N$ and $1<r,s,q<\infty$ satisfying $1+\frac{\alpha-\gamma-\mu} N=\frac1q+\frac1r+\frac1s$ and $0<-\gamma<\frac N{s'}$, $0<-\mu<\frac N{q'}$. Thus,
$$\|(I_\alpha*|\cdot|^\gamma u)|\cdot|^\mu v\|_{r'}\leq C_{N,s,\alpha,\gamma,\mu}\|u\|_{s}\|v\|_{q},\quad\forall u\in L^s, \,\forall v\in L^q.;$$
\end{enumerate}
\end{lem}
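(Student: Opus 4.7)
\textbf{Proof proposal for Lemma \ref{hls}.} All three inequalities are classical; the plan is to sketch the standard derivations, since for the purposes of this paper they are used only as tools and the references \cite{el,sw} carry the detailed arguments.

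Part 1 is the Hardy-Littlewood-Sobolev inequality. The key observation is that the kernel $K(x):=|x|^{-\lambda}$ lies in the weak-Lebesgue space $L^{N/\lambda,\infty}(\R^N)$, because $|\{x:|x|^{-\lambda}>t\}|=c_N t^{-N/\lambda}$. Feeding this into the weak-type Young convolution inequality $\|f*g\|_t\lesssim \|f\|_p\|g\|_{q,\infty}$ with $q=N/\lambda$, $p=r$, $t=s'$, and then using Marcinkiewicz interpolation to upgrade the endpoint estimate to a strong bound (alternatively, by Riesz rearrangement and the layer-cake formula), one obtains $\|K*f\|_{s'}\lesssim \|f\|_r$ precisely when $2=\tfrac{1}{r}+\tfrac{1}{s}+\tfrac{\lambda}{N}$. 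Pairing against $v\in L^s$ through duality yields the stated bilinear form.

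Part 2 follows from Part 1 combined with Hölder. Writing $I_\alpha=c_{N,\alpha}|\cdot|^{\alpha-N}$, Part 1 provides $\|I_\alpha*u\|_a\lesssim \|u\|_s$ with $\tfrac{1}{a}=\tfrac{1}{s}-\tfrac{\alpha}{N}$. Hölder then yields $\|(I_\alpha*u)\,v\|_{r'}\leq \|I_\alpha*u\|_a\|v\|_q$ provided $\tfrac{1}{r'}=\tfrac{1}{a}+\tfrac{1}{q}$, and this last identity rearranges exactly into the scaling relation $1+\tfrac{\alpha}{N}=\tfrac{1}{q}+\tfrac{1}{r}+\tfrac{1}{s}$.

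Part 3 is the two-weight Stein-Weiss extension of HLS. By duality the claimed bound is equivalent to
\begin{equation*}
\Bigl|\iint \frac{|y|^\gamma u(y)\,|x|^\mu v(x)\,w(x)}{|x-y|^{N-\alpha}}\,dx\,dy\Bigr|\lesssim \|u\|_s\|v\|_q\|w\|_r
\end{equation*}
for every $w\in L^r$. Collapsing $V:=vw$ by Hölder into a single $L^p$ norm with $\tfrac{1}{p}=\tfrac{1}{q}+\tfrac{1}{r}$, the task reduces to the classical bilinear Stein-Weiss inequality
\begin{equation*}
\Bigl|\iint \frac{|y|^\gamma u(y)\,|x|^\mu V(x)}{|x-y|^{N-\alpha}}\,dx\,dy\Bigr|\lesssim \|u\|_s\|V\|_p,
\end{equation*}
whose admissibility hypotheses are the weight size conditions $-\gamma<\tfrac{N}{s'}$ and $-\mu<\tfrac{N}{p'}$ (the latter following from $-\mu<\tfrac{N}{q'}$ together with $p\leq q$) plus the Stein-Weiss scaling identity, which matches the hypothesis of the lemma after the substitution $\tfrac{1}{p}=\tfrac{1}{q}+\tfrac{1}{r}$.

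The main obstacle is Part 3: the Stein-Weiss inequality itself. The standard proof in \cite{sw} proceeds by decomposing the integration region into the three parts $\{|y|\leq |x|/2\}$, $\{|x|\leq |y|/2\}$ and the diagonal piece $\{|x|/2<|y|<2|x|\}$. On the two off-diagonal pieces the weight $|y|^\gamma$ or $|x|^\mu$ can be absorbed into a modified Riesz kernel and one reduces to the unweighted Part 1; on the diagonal piece the weights are essentially comparable and one invokes a dyadic decomposition in the parameter $|x|\sim|y|\sim 2^j$, summing a geometric series thanks to the strict inequalities $-\gamma<\tfrac{N}{s'}$, $-\mu<\tfrac{N}{q'}$.
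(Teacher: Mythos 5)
The paper itself offers no proof of this lemma --- it is simply recalled with the citations \cite{el,sw} --- so your sketch is additional material rather than a rival to an argument in the text. Parts 1 and 2 of your proposal are the standard derivations (weak Young plus interpolation/duality for Hardy--Littlewood--Sobolev, then H\"older against the Sobolev-type bound $\|I_\alpha*u\|_a\lesssim\|u\|_s$ with $\frac1a=\frac1s-\frac\alpha N$) and are correct.

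Part 3, however, contains a genuine gap. After you collapse $V:=vw$ into $L^p$ with $\frac1p=\frac1q+\frac1r$, the Stein--Weiss admissibility condition on the weight $|x|^{\mu}$ attached to $V$ is $-\mu<\frac N{p'}$. You claim this follows from the hypothesis $-\mu<\frac N{q'}$ ``together with $p\le q$,'' but the monotonicity runs the wrong way: since $\frac1p=\frac1q+\frac1r>\frac1q$, one has $\frac1{p'}=\frac1{q'}-\frac1r<\frac1{q'}$, hence $\frac N{p'}<\frac N{q'}$, and the hypothesis $-\mu<\frac N{q'}$ does \emph{not} imply the strictly stronger inequality $-\mu<\frac N{p'}$. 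The stronger condition is not an artifact of your reduction: by H\"older duality the estimate forces $|x|^{\mu}$ to lie in $L^m$ near the origin with $\frac1m=\frac1{r'}-\frac1q=\frac1{p'}$, so $-\mu<\frac N{p'}$ is genuinely required, and it must either be added as a hypothesis or verified for the concrete exponents used in Proposition \ref{fn1}. A related point you gloss over when asserting that the Stein--Weiss scaling identity ``matches the hypothesis of the lemma'': with the sign convention $\gamma,\mu<0$ of the statement, the scaling relation needed (and the one actually invoked in the proof of Proposition \ref{fn1}, where $\gamma=\mu=b$) is $1+\frac{\alpha+\gamma+\mu}N=\frac1q+\frac1r+\frac1s$, not the printed $1+\frac{\alpha-\gamma-\mu}N=\frac1q+\frac1r+\frac1s$; a careful write-up of Part 3 should flag and resolve this discrepancy rather than declare a match.
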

Finally, let us give an abstract result \cite{st4}.
\begin{lem}\label{abs}
Let $T>0$ and $X\in C([0,T],\R_+)$ such that $$X\leq a+bX^{\theta}\mbox{ on } [0,T],$$
where $a$, $b>0$, $\theta>1$, $a<(1-\frac{1}{\theta})(\theta b)^{\frac{1}{1-\theta}}$ and $X(0)\leq (\theta b)^{\frac{1}{1-\theta}}$. Then
$$X\leq\frac{\theta}{\theta -1}a \mbox{ on } [0,T].$$
\end{lem}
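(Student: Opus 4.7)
The plan is to translate the pointwise inequality $X(t)\le a+bX(t)^\theta$ into the geometric constraint $g(X(t))\ge 0$, where $g(x):=a+bx^\theta-x$, and then exploit continuity of $X$ to trap its trajectory inside a single connected component of $\{x\ge 0:g(x)\ge 0\}$. First I would study $g$ on $[0,\infty)$: since $g''(x)=\theta(\theta-1)bx^{\theta-2}>0$, the function $g$ is strictly convex with a unique global minimum at $x_0:=(\theta b)^{1/(1-\theta)}$, at which $g'(x_0)=0$ and therefore $bx_0^\theta=x_0/\theta$. Substituting back into $g$ yields
\begin{equation*}
g(x_0)=a-\left(1-\tfrac{1}{\theta}\right)(\theta b)^{1/(1-\theta)},
\end{equation*}
which is strictly negative by the hypothesis on $a$. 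Combined with $g(0)=a>0$ and $g(x)\to+\infty$ as $x\to+\infty$, this forces $g$ to have exactly two positive roots $x_1<x_0<x_2$, so that $\{x\ge 0:g(x)\ge 0\}=[0,x_1]\cup[x_2,+\infty)$.

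Next I would locate the target bound $y_*:=\frac{\theta a}{\theta-1}$ within this picture. Writing $a=\tfrac{\theta-1}{\theta}y_*$ and substituting into $g$ produces
\begin{equation*}
g(y_*)=y_*\left(by_*^{\theta-1}-\tfrac{1}{\theta}\right),
\end{equation*}
so that $g(y_*)\ge 0$ if and only if $y_*\ge x_0$. The hypothesis $a<(1-\tfrac{1}{\theta})(\theta b)^{1/(1-\theta)}$ rewrites precisely as $y_*<x_0$, giving $g(y_*)<0$. Since $g$ is negative only on $(x_1,x_2)$, this yields the crucial strict inequality $x_1<y_*$.

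The conclusion then follows from a short continuity argument. Evaluating the assumed inequality at $t=0$ gives $g(X(0))\ge 0$, so $X(0)\in[0,x_1]\cup[x_2,+\infty)$; the assumption $X(0)\le (\theta b)^{1/(1-\theta)}=x_0<x_2$ rules out the upper branch and forces $X(0)\le x_1$. Since $X\colon[0,T]\to\R_+$ is continuous with values in the disconnected closed set $[0,x_1]\cup[x_2,+\infty)$, the image of $[0,T]$ must lie in the single connected component containing $X(0)$, namely $[0,x_1]$. Therefore $X(t)\le x_1<y_*=\tfrac{\theta}{\theta-1}a$ on $[0,T]$, which is the desired bound.

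The argument is purely algebraic once the auxiliary function $g$ is identified, and the only delicate point is verifying that continuity truly precludes a jump from $[0,x_1]$ to $[x_2,+\infty)$; this reduces to the intermediate value theorem, since any such transition would produce a time at which $X(t)\in(x_1,x_2)$, contradicting $g(X(t))\ge 0$. The main obstacle is really the bookkeeping: checking that the numerical condition $a<(1-\tfrac{1}{\theta})(\theta b)^{1/(1-\theta)}$ is exactly what is needed to place $y_*$ inside the forbidden interval $(x_1,x_2)$, and that $X(0)\le (\theta b)^{1/(1-\theta)}$ is precisely the initial constraint needed to pin $X$ to the correct connected component.
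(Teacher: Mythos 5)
Your proof is correct, and every algebraic step checks out: the minimum of $g(x)=a+bx^{\theta}-x$ is indeed at $x_0=(\theta b)^{1/(1-\theta)}$, the hypothesis on $a$ is exactly $g(x_0)<0$ (equivalently $y_*<x_0$), and the connectedness/intermediate-value argument correctly pins $X$ to the component $[0,x_1]$ with $x_1<\frac{\theta}{\theta-1}a$. The paper itself does not reprove this lemma (it is quoted from \cite{st4}), but the proof there is this same standard continuity--bootstrap argument via the convex auxiliary function, so your approach matches the intended one.
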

\section{Morawetz identity}
One adopts the convention that repeated indexes are summed. 
Recall a classical Morawetz estimate \cite{st4} satisfied by the energy global solutions to the inhomogeneous Choquard problem \eqref{S}. 
\begin{lem}\label{mrwtz}
Take $(N,\alpha,b)$ satisfying $(\mathcal C)$, $p_*< p<p^*$ such that $p\geq2$ and $u\in C_{T}(H^2)$ be a local solution to \eqref{S}. Let $a:\R^N\to\R$ be a convex smooth function  and the real function defined on $[0,T)$, by
$$M_a:t\to2\int_{\R^N}\nabla a(x)\cdot\Im\Big(\nabla u(t,x)\bar u(t,x)\Big)\,dx.$$
Then, the following equality holds on $[0,T)$,
\begin{eqnarray*}
M'_a
&=&2\int_{\R^N}\Big(2\partial_{jk}\Delta a\partial_ju\partial_k\bar u-\frac12(\Delta^3a)|u|^2-4\partial_{jk}a\partial_{ik}u\partial_{ij}\bar u+\Delta^2a|\nabla u|^2\Big)\\
&+&2\Big((1-\frac2p)\int_{\R^N}\Delta a(I_\alpha*|\cdot|^b|u|^{p})|x|^b|u|^p\,dx-\frac2{p}\int_{\R^N}\partial_ka\partial_k(|x|^b[I_\alpha*|\cdot|^b|u|^p])|u|^{p}\,dx\Big).
\end{eqnarray*}
\end{lem}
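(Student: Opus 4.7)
The plan is to carry out the standard Morawetz-type computation: differentiate $M_a(t)$ in time, substitute $\dot u$ using the equation, and then integrate by parts repeatedly to transfer derivatives from $u$ onto the weight $a$. Writing $\Im(\partial_j u\,\bar u)=\tfrac1{2i}(\partial_j u\,\bar u-\partial_j\bar u\,u)$ and using $i\dot u=-\Delta^2 u+N(u)$ with $N(u):=(I_\alpha*|\cdot|^b|u|^p)|x|^b|u|^{p-2}u$, I would split $M'_a$ into a linear contribution arising from the biharmonic dispersion and a nonlinear contribution arising from $N(u)$, and treat the two blocks separately.

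For the linear part, this is the classical fourth-order Morawetz computation. Starting from $2\Re\int\partial_j a\cdot\partial_j(\Delta^2 u)\bar u\,dx$ plus its complex conjugate, I would integrate by parts four times, at each step shifting one derivative from $u$ onto either $a$ or $\bar u$ and carefully symmetrising in the summed indices $i,j,k$. After regrouping, the resulting tensor expression reorganises into exactly the four announced terms
\[ 2\partial_{jk}\Delta a\,\partial_j u\,\partial_k\bar u,\quad -\tfrac12(\Delta^3 a)|u|^2,\quad -4\partial_{jk}a\,\partial_{ik}u\,\partial_{ij}\bar u,\quad \Delta^2 a\,|\nabla u|^2. \]
The computation is mechanical and parallels what has been done for the pure biharmonic Schr\"odinger equation (e.g.\ Pausader); I do not expect any real difficulty beyond bookkeeping.

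For the nonlinear part, I would expand $\partial_j N(u)$ by the product rule into three pieces, according to whether the derivative falls on $|x|^b$, on the convolution $(I_\alpha*|\cdot|^b|u|^p)$, or on $|u|^{p-2}u$. The pointwise identity $\Re(|u|^{p-2}u\,\partial_j\bar u)=\tfrac1p\partial_j|u|^p$ converts the third piece into a divergence, which upon integration by parts against $\partial_j a$ produces the $\Delta a$ factor. For the first two pieces, I would use the self-adjointness $\int(I_\alpha*f)g\,dx=\int f(I_\alpha*g)\,dx$ together with translation invariance $\partial_j(I_\alpha*f)=I_\alpha*\partial_j f$ to symmetrise: what formally looks like two independent contributions collapses into a single term with coefficient $\tfrac1p$. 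Collecting all $\Delta a$ terms yields the prefactor $1-\tfrac2p$ in the statement, while the part that cannot be absorbed by $I_\alpha$-symmetry (because $|x|^b$ is not translation invariant) is retained as $-\tfrac2p\,\partial_k a\,\partial_k(|x|^b[I_\alpha*|\cdot|^b|u|^p])|u|^p$.

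The main obstacle is precisely this final bookkeeping step in the nonlinear part: the inhomogeneous weight $|x|^b$ breaks translation invariance, so full symmetrisation against $I_\alpha$ is impossible and one is forced to keep an explicit $\partial_k a$ term, rather than having everything collapse to a clean $\Delta a$ expression as in the homogeneous Hartree case. A secondary technical issue is the rigorous justification of the integrations by parts near the origin (where $|x|^b$ is singular) and at infinity; the standard remedy is to perform the calculation first on smooth, compactly supported, rapidly decaying data for which every quantity is absolutely convergent, and then extend the identity to general $u\in C_T(H^2)$ by a density/approximation argument exploiting the assumption $(\mathcal C)$ on $(N,\alpha,b)$.
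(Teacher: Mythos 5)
The paper offers no proof of this lemma: it is quoted verbatim from the author's earlier work \cite{st4} (``Recall a classical Morawetz estimate \cite{st4}\dots''), so there is nothing in the present manuscript to compare your argument against line by line. That said, your outline is the standard virial/Morawetz derivation that the cited reference carries out, and it is essentially correct: differentiate $M_a$, substitute $\dot u=i\Delta^2u-iN(u)$, and integrate by parts; the biharmonic block is the classical fourth-order computation of Pausader type, and the nonlinear block follows from $\partial_k\bar u\,N+\partial_k u\,\bar N=\tfrac2p V\,\partial_k(|u|^p)$ with $V:=|x|^b(I_\alpha*|\cdot|^b|u|^p)$, followed by one integration by parts, which produces exactly $2(1-\tfrac2p)\int\Delta a\,V|u|^p-\tfrac4p\int\partial_ka\,\partial_kV\,|u|^p$. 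One small correction to your plan: no symmetrisation via the self-adjointness of $I_\alpha$ is needed (or used) to reach the stated identity, since the formula retains $\partial_k\bigl(|x|^b[I_\alpha*|\cdot|^b|u|^p]\bigr)$ wholesale rather than redistributing the derivative between the weight and the convolution; that symmetrisation only enters later, in the proof of Proposition \ref{bnd1'}, when the term $(A)$ is evaluated. Your remark about justifying the integrations by parts near the singularity of $|x|^b$ and at infinity by approximation is appropriate and is the only genuinely delicate point.
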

Let us write the main result of this section.
\begin{prop}\label{bnd1'}
Take $(N,\alpha,b)$ satisfying $(\mathcal C)$ and $p_*<p<p^*$ such that $p\geq2$. Let $u_0\in H^2$ satisfying \eqref{ss}.
Then, for any $T>0$, one has
$$\int_0^T\|u(t)\|_{L^\frac{2Np}{N+\alpha+2b}(|x|<R)}^{2p}\,dt\leq CT^{1/(1-b)}.$$
\end{prop}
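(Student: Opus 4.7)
The plan is to apply the Morawetz identity of Lemma \ref{mrwtz} with a weight $a_R$ truncated at the scale $R$, to extract the coercive variational quantity from Lemma \ref{bnd} on the interior $\{|x|<R\}$, to control the exterior contribution through the decay of the inhomogeneity $|\cdot|^b$, and to optimize the resulting inequality over $R$ in order to produce the sublinear bound $T^{1/(1-b)}$.

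First, one chooses a smooth radial convex function $a_R:\R^N\to\R$ with $a_R(x)=|x|^2/2$ on $\{|x|<R\}$, $\|\nabla a_R\|_\infty\leq CR$, derivatives of order $\geq 2$ bounded uniformly in $R$ (with $\partial_{jk}a_R\leq \delta_{jk}$) and derivatives of order $\geq 3$ supported in $\{|x|\geq R\}$. By conservation of mass together with the uniform $H^2$ bound from Lemma \ref{bnd}(1), the boundary Morawetz quantity satisfies $|M_{a_R}(t)|\leq 2\|\nabla a_R\|_\infty \|\nabla u(t)\|\,\|u(t)\|\leq CR$ uniformly in $t$. Next, one splits $M'_{a_R}=\mathcal I_{\mathrm{in}}+\mathcal I_{\mathrm{out}}$ according to whether the integration variable lies in $\{|x|<R\}$ or its complement. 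On the interior, where $a_R=|x|^2/2$, all derivatives of $a_R$ of order $\geq 3$ vanish and the kinetic part collapses; after a Pohozaev-type integration by parts on the non-local piece $\int \partial_k a_R\,\partial_k(|x|^b I_\alpha\ast|\cdot|^b|u|^p)|u|^p\,dx$, the interior contribution reduces, up to controllable lower-order errors, to the coercive combination of Lemma \ref{bnd}(2) and therefore satisfies $\mathcal I_{\mathrm{in}}\geq c\|\psi_R u\|_{L^{2Np/(N+\alpha+2b)}}^{2p}$ for some $c>0$. For the exterior, in $\{|x|>R\}$ one has $|x|^b\leq R^b$ (recall $b<0$); combined with Lemma \ref{hls}(3) and the uniform $H^2$ bound this produces $|\mathcal I_{\mathrm{out}}(t)|\leq C R^b$ uniformly in $t$.

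Integrating in $t\in[0,T]$ and combining the three estimates, one obtains
$$c\int_0^T \|\psi_R u\|_{L^{2Np/(N+\alpha+2b)}}^{2p}\,dt \leq CR + CR^b T.$$
Minimizing the right-hand side in $R>0$, the optimum is reached at $R\sim T^{1/(1-b)}$, producing precisely the stated bound; since $\psi_R u = u$ on $\{|x|<R/2\}$, the Proposition follows after relabeling $R$. \textbf{The main obstacle} I anticipate is the interior Pohozaev-type computation: the interplay of the singular inhomogeneity $|x|^b$, the Riesz convolution $I_\alpha\ast$, and the radial field $x\cdot\nabla$ makes the integrations by parts genuinely delicate, and Lemma \ref{hls}(3) is precisely what is needed to absorb the mixed-weight remainders into norms already controlled by Lemma \ref{bnd}.
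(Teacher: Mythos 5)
Your proposal follows essentially the same route as the paper: a virial weight equal to $|x|^2/2$ inside $\{|x|<R\}$ and with $\|\nabla a_R\|_\infty\lesssim R$, the bound $\sup_t|M_{a_R}(t)|\lesssim R$, interior coercivity via Lemma \ref{bnd}(2) after the Pohozaev-type computation on the non-local term, exterior control through $|x|^b\leq R^b$ combined with Hardy--Littlewood--Sobolev, and the choice $R\sim T^{1/(1-b)}$. The only cosmetic difference is that the paper discards the exterior Hessian term by its favorable sign (convexity, via the angular-gradient decomposition) rather than absorbing it into the $O(R^b)$ error, but this does not change the argument.
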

\begin{proof}
Take a smooth real function such that 
$$f:r\to\left\{
\begin{array}{ll}
\frac{r^2}2,\quad\mbox{if}\quad 0\leq r\leq1;\\
r,\quad\mbox{if}\quad  r\geq2,
\end{array}
\right.
$$
moreover,
$$\min\{f',f''\}\geq0,\quad\mbox{on}\quad [1,2].$$
 Note that for $|x|\leq 1$, confusing for $x\in\R^N$, $f(x):=f(|x|)$, one has
$$f_{ij} =\delta_{ij},\quad\Delta f =N\quad\mbox{and}\quad \partial^\gamma f= 0 \quad\mbox{for}\quad |\gamma| \geq 3.$$
Finally, one denotes for $R>0$, the smooth radial function defined on $\R^N$ by $f_R:=R^2f(\frac{|\cdot|}R)$ and the real function $M_R:=M_{f_R}$. 
Using the estimate $\|\nabla^\gamma f_R\|_\infty\lesssim R^{2-|\gamma|}$, one has
\begin{align*}
|\int_{\R^N}\Delta^2f_R|\nabla u|^2\,dx|+|\int_{\R^N}\partial_{jk}\Delta f_R\partial_ju\partial_k\bar u\,dx|&\lesssim R^{-2};\\
|\int_{\R^N}(\Delta^3f_R)|u|^2\,dx|&\lesssim R^{-4}.
\end{align*}
Thus, by Morawetz estimate in Lemma \ref{mrwtz}, one gets
\begin{eqnarray*}
M_R'
&=&-\frac4{p}\int_{\R^N}\partial_kf_R\partial_k[(I_\alpha*|\cdot|^b|u|^p)|x|^b]|u|^{p}\,dx+O(R^{-2})\\
&+&2\Big({N}(1-\frac2p)\int_{\{|x|<R\}}(I_\alpha*|\cdot|^b|u|^p)|x|^b|u|^p\,dx-4\int_{\{|x|<R\}}|\Delta u|^2\,dx\Big)\\
&+&2\Big((1-\frac2p)\int_{\{|x|>R\}}\Delta f_R(I_\alpha*|\cdot|^b|u|^p)|x|^b|u|^p\,dx-4\int_{\{|x|>R\}}\partial_{jk}f_R\partial_{ik}u\partial_{ij}\bar u\,dx\Big).
\end{eqnarray*}
Moreover, denoting the radial derivative by $\partial_r:=\nabla\cdot\frac{x}{|x|}$, one writes
$$\partial_{jk}f_R=\Big(\frac{\delta_{jk}}r-\frac{x_jx_k}{r^3}\Big)\partial_rf_R+\frac{x_jx_k}{r^2}\partial_r^2f_R.$$
So,
\begin{eqnarray*}
\int_{\{|x|>R\}}\partial_{jk}f_R\partial_{ik}u\partial_{ij}\bar u\,dx
&=&\int_{\{|x|>R\}}\Big[\Big(\frac{\delta_{jk}}r-\frac{x_jx_k}{r^3}\Big)\partial_rf_R+\frac{x_jx_k}{r^2}\partial_r^2f_R\Big]\partial_{ik}u\partial_{ij}\bar u\,dx\\
&=&\sum_{i=1}^N\int_{\{|x|>R\}}\Big(|\nabla u_i|^2-\frac{|x\cdot\nabla u_i|^2}{|x|^2}\Big)\frac{\partial_rf_R}{|x|}\,dx\\
&+&\sum_{i=1}^N\int_{\{|x|>R\}}\frac{|x\cdot\nabla u_i|^2}{|x|^2}\partial_r^2f_R\,dx\\
&=&\sum_{i=1}^N\int_{\{|x|>R\}}|\not\nabla u_i|^2\frac{\partial_rf_R}{|x|}\,dx+\sum_{i=1}^N\int_{\{|x|>\frac R2\}}\frac{|x\cdot\nabla u_i|^2}{|x|^2}\partial_r^2f_R\,dx,
\end{eqnarray*}
where the angular gradient is $\not\nabla :=\nabla -\frac{x\cdot\nabla}{|x|^2}x$. Then,
\begin{eqnarray*}
-M_R'
&\geq&\frac4{p}\int_{\R^N}\partial_kf_R\partial_k[(I_\alpha*|\cdot|^b|u|^p)|x|^b]|u|^{p}\,dx+O(R^{-2})\\
&+&2\Big(4\int_{\{|x|<R\}}|\Delta u|^2\,dx-{N}(1-\frac2p)\int_{\{|x|<R\}}(I_\alpha*|\cdot|^b|u|^p)|x|^b|u|^p\,dx\Big)\\
&-&(1-\frac2p)\int_{\{|x|>R\}}\Delta f_R(I_\alpha*|\cdot|^b|u|^p)|x|^b|u|^p\,dx.
\end{eqnarray*}
Take the quantity 
\begin{eqnarray*}
(A)
&:=&\int_{\R^N}\partial_kf_R\partial_k[(I_\alpha*|\cdot|^b|u|^p)|x|^b]|u|^{p}\,dx\\
&=&-(N-\alpha)\int_{\R^N}\nabla f_R\Big(\frac\cdot{|x|^2}I_\alpha*|\cdot|^b|u|^p\Big)|x|^b|u|^{p}\,dx+b\int_{\R^N}\frac{\nabla f_R\cdot x}{|x|^2}(I_\alpha*|\cdot|^b|u|^p)|x|^b|u|^p\,dx\\
&:=&-(N-\alpha)(I)+b(II).
\end{eqnarray*}
With the properties of $f_R$, one writes
\begin{eqnarray*}
(II)
&=&\int_{|x|<R}(I_\alpha*|\cdot|^b|u|^p)|x|^b|u|^{p}\,dx+O\Big(\int_{|x|>R}(I_\alpha*|\cdot|^b|u|^p)|x|^b|u|^p\,dx\Big).
\end{eqnarray*}
With calculus done in \cite{st4}, one has
\begin{eqnarray*}
(I)
&=&\frac12\int_{|x|<R}\int_{|y|<R}I_\alpha(x-y)|y|^b|u(y)|^p|x|^b|u(x)|^{p}\,dx+O\Big(\int_{|x|>R}(I_\alpha*|\cdot|^b|u|^p)|x|^b|u|^p\,dx\Big)\\
&=&\frac12\int_{|x|<R}(I_\alpha*|\cdot|^b|u|^p)|x|^b|u(x)|^{p}\,dx+O\Big(\int_{|x|>R}(I_\alpha*|\cdot|^b|u|^p)|x|^b|u|^p\,dx\Big).
\end{eqnarray*}
Thus, 
\begin{eqnarray*}
(A)
&=&(b-\frac{N-\alpha}2)\int_{|x|<R}(I_\alpha*|\cdot|^b|u|^p)|x|^b|u(x)|^{p}\,dx
+O\Big(\int_{|x|>R}(I_\alpha*|\cdot|^b|u|^p)|x|^b|u|^p\,dx\Big).
\end{eqnarray*}
So,
\begin{eqnarray*}
-M_R'
&\geq&2\Big(4\int_{\{|x|<R\}}|\Delta u|^2\,dx-{N}(1-\frac2p)\int_{\{|x|<R\}}(I_\alpha*|\cdot|^b|u|^p)|x|^b|u|^p\,dx\Big)\\
&+&\frac4{p}(A)+O(R^{-2})+O\Big(\int_{\{|x|>R\}}\Delta f_R(I_\alpha*|\cdot|^b|u|^p)|x|^b|u|^p\,dx\Big)\\
&\geq&2\Big(4\int_{\{|x|<R\}}|\Delta u|^2\,dx-{N}(1-\frac2p)\int_{\{|x|<R\}}(I_\alpha*|\cdot|^b|u|^p)|x|^b|u|^p\,dx\Big)\\
&+&\frac4{p}(b-\frac{N-\alpha}2)\int_{|x|<R}(I_\alpha*|\cdot|^b|u|^p)|x|^b|u(x)|^{p}\,dx\\
&+&O(R^{-2})+O\Big(\int_{\{|x|>R\}}(I_\alpha*|\cdot|^b|u|^p)|x|^b|u|^p\,dx\Big).
\end{eqnarray*}
Let us write
\begin{eqnarray*}
\int_{\{|x|>R\}}(I_\alpha*|\cdot|^b|u|^p)|x|^b|u|^p\,dx
&\leq&R^b\int_{\R^N}(I_\alpha*|\cdot|^b|u|^p)|u|^p\,dx\\
&\leq&R^b\int_{\R^N}\Big(\int_{\{|y|<1\}}+\int_{\{|y|>1\}}\Big)I_\alpha(x-y)|y|^b|u(y)|^p|u(x)|^p\,dy\,dx.
\end{eqnarray*}
Now, with Hardy-Littlewood-Sobolev estimate, one has
\begin{eqnarray*}
\int_{\R^N}\int_{\{|y|<1\}}I_\alpha(x-y)|y|^b|u(y)|^p|u(x)|^p\,dy\,dx
&\lesssim&\||\cdot|^b\|_{L^a(|x|<1)}\|u\|_r^{2p},
\end{eqnarray*}
where 
$$1+\frac\alpha N=\frac1a+\frac{2p}r.$$
Taking $a<\frac N{-b}$, one gets $1+\frac\alpha N-\frac{2p}r>\frac{-b}N$ and equivalently 
$$\frac{2Np}{N+\alpha+b}<r.$$
Since $p<p^*$, there is $r\in[2,\frac{2N}{N-4}]$ satisfying the above estimate. Thus,
\begin{eqnarray*}
\int_{\R^N}\int_{\{|y|<1\}}I_\alpha(x-y)|y|^b|u(y)|^p|u(x)|^p\,dy\,dx
&\lesssim&\|u\|_{H^2}^{2p}.
\end{eqnarray*}
Now, with Hardy-Littlewood-Sobolev estimate, one has
\begin{eqnarray*}
\int_{\R^N}\int_{\{|y|>1\}}I_\alpha(x-y)|y|^b|u(y)|^p|u(x)|^p\,dy\,dx
&\lesssim&\||\cdot|^b\|_{L^a(|x|>1)}\|u\|_r^{2p},
\end{eqnarray*}
where 
$$1+\frac\alpha N=\frac1a+\frac{2p}r.$$
Taking $a>\frac N{-b}$, one gets $1+\frac\alpha N-\frac{2p}r<\frac{-b}N$ and equivalently 
$$r<\frac{2Np}{N+\alpha+b}.$$
Since $p_*<p$, there is $r\in[2,\frac{2N}{N-4}]$ satisfying the above estimate. Thus,
\begin{eqnarray*}
\int_{\R^N}\int_{\{|y|>1\}}I_\alpha(x-y)|y|^b|u(y)|^p|u(x)|^p\,dy\,dx
&\lesssim&\|u\|_{H^2}^{2p}.
\end{eqnarray*}
Thus,
\begin{eqnarray*}
-M_R'
&\geq&8\Big(\int_{|x|<R}|\Delta u|^2\,dx-\frac B{2p}\int_{|x|<R}(I_\alpha*|\cdot|^b|u|^p)|x|^b|u|^p\,dx\Big)+O(R^{b}).
\end{eqnarray*}
On the other hand, an expansion via the properties of $\psi$ gives
\begin{eqnarray*}
\|\Delta(\psi_R u)\|^2
&\leq&\|\psi_R\Delta u\|^2+ C(u_0,\phi)R^{-2}\\
&\leq&\int_{|x|<R}|\Delta u|^2-\int_{\frac R2<|x|<R}(1-\psi_R^2)|\Delta u|^2\,dx+C(u_0,\phi)R^{-2}.
\end{eqnarray*}
Moreover,
\begin{eqnarray*}
&&\int_{\R^N}(I_\alpha*|\cdot|^b|\psi_Ru|^p)|x|^b|\psi_Ru|^p\,dx-\int_{\R^N}(I_\alpha*|\cdot|^b(1-\psi^p)|u|^p)|x|^b|u|^p\,dx\\
&=&\int_{\R^N}(I_\alpha*|\cdot|^b|u|^p)|x|^b|\psi_Ru|^p\,dx\\
&=&\int_{|x|<R}(I_\alpha*|\cdot|^b|u|^p)|x|^b|u|^p\,dx-\int_{\frac R2<|x|<R}(I_\alpha*|\cdot|^b|u|^p)|x|^b(1-\psi_R^p)|u|^p\,dx.
\end{eqnarray*}
Then,
$$
\int_{\R^N}(I_\alpha*|\cdot|^b|\psi_Ru|^p)|x|^b|\psi_Ru|^p\,dx=\int_{|x|<R}(I_\alpha*|\cdot|^b|u|^p)|x|^b|u|^p\,dx+O(R^b).
$$
So, with Lemma \ref{bnd},
one gets 
\begin{eqnarray*}
\sup_{[0,T]}|M|
&\geq&8\int_0^T\Big(\int_{\R^N}|\Delta(\psi_R u)|^2\,dx-\frac{B}{2p}\int_{\R^N}(I_\alpha*|\psi_Ru|^p)|\psi_Ru|^p\,dx\Big)\,dt+\mathcal O(R^{b})T\\
&\geq&8\delta'\int_0^T\|\psi_Ru(t)\|_{\frac{2Np}{N+\alpha+2b}}^{2p}\,dt+\mathcal O(R^{b})T\\
&\geq&8\delta'\int_0^T\|u(t)\|_{L^\frac{2Np}{N+\alpha+2b}(|x|<R)}^{2p}\,dt+\mathcal O(R^{b})T.
\end{eqnarray*}
The previous calculus gives
\begin{eqnarray*}
\int_0^T\|u(t)\|_{L^\frac{2Np}{N+\alpha+2b}(|x|<R)}^{2p}\,dt
&\leq& C\Big(\sup_{[0,T]}|M|+TR^{b}\Big)\\
&\leq& C\Big(R+TR^{b}\Big).
\end{eqnarray*}
Taking $R=T^{1/(1-b)}>>1$, one gets the requested estimate. 
For $0<T<<1$, the proof follows with Sobolev injections. 
\end{proof}
As a consequence, one has the following energy evacuation.
\begin{lem}\label{evac1}
Take $(N,\alpha,b)$ satisfying $(\mathcal C)$. Let $p_*<p<p^*$ such that $p\geq2$ and $u_0\in H^2$ satisfying \eqref{ss}.
Then, there exists a sequence of real numbers $t_n\to\infty$ such that the global solution to \eqref{S} satisfies
$$\lim_n\int_{\{|x|<R\}}|u(t_n,x)|^2\,dx=0,\quad\mbox{for all}\quad R>0.$$
\end{lem}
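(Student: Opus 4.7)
The plan is to extract the evacuation sequence directly from the Morawetz-type bound in Proposition \ref{bnd1'}. The decisive observation is that since $b<0$ under $(\mathcal C)$, the exponent $\frac{1}{1-b}$ is strictly less than $1$, so the estimate
\[
\int_0^T \|u(t)\|_{L^q(|x|<R)}^{2p}\,dt \leq CT^{1/(1-b)},\qquad q:=\frac{2Np}{N+\alpha+2b},
\]
is sublinear in $T$ when we couple $R$ to $T$ as in the proposition's proof. Dividing by $T$ yields a vanishing time-average, which will supply the sequence.

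First I would fix $T_n\to\infty$ (e.g.\ $T_n=2^n$) and set $R_n:=T_n^{1/(1-b)}\to\infty$. Restricting the bound to the half-interval $[T_n/2,T_n]$ and applying the mean-value theorem for integrals produces some $t_n\in[T_n/2,T_n]$ with
\[
\|u(t_n)\|_{L^q(|x|<R_n)}^{2p} \leq 2C\,T_n^{1/(1-b)-1}\longrightarrow 0,
\]
and by construction $t_n\geq T_n/2\to\infty$.

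Next I would upgrade this to every fixed $R>0$. Since $R_n\to\infty$, for any fixed $R>0$ one has $R\leq R_n$ eventually, and monotonicity of the truncated $L^q$-norm in the domain gives $\|u(t_n)\|_{L^q(|x|<R)}\to 0$. Because $p>p_*=1+\frac{\alpha+4+2b}{N}$ forces $q=\frac{2Np}{N+\alpha+2b}>2$, H\"older's inequality on the ball $\{|x|<R\}$ converts this into
\[
\int_{|x|<R}|u(t_n,x)|^2\,dx \lesssim R^{N(1-2/q)}\|u(t_n)\|_{L^q(|x|<R)}^2 \longrightarrow 0,
\]
which is the claimed evacuation.

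This is essentially a pigeonhole extraction, so I do not expect a genuine analytic obstacle; the one subtle point is arranging $R_n$ to grow in tandem with $T_n$ so that a \emph{single} sequence $t_n$ serves every fixed $R$ simultaneously, rather than producing an $R$-dependent sequence and invoking a diagonal argument. The remaining verifications---that $q>2$ and that $1/(1-b)<1$---are immediate from $p>p_*$ and $b<0$ respectively.
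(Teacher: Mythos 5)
Your proposal is correct and follows essentially the same route as the paper: deduce from Proposition \ref{bnd1'} that the time-averaged localized $L^{\frac{2Np}{N+\alpha+2b}}$-norm vanishes (using $b<0$ so that $1/(1-b)<1$), extract a sequence $t_n\to\infty$ along which it tends to zero, and conclude by H\"older on the ball. In fact you are more careful than the paper, which simply writes ``take $t_n\to\infty$'' and asserts the convergence; your pigeonhole extraction on $[T_n/2,T_n]$ with $R_n=T_n^{1/(1-b)}\to\infty$ is exactly the missing justification.
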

\begin{proof}
Take $t_n\to\infty$. By H\"older estimate
$$\int_{\{|x|<R\}}|u(t_n,x)|^2\,dx\leq R^{\frac{2B}p}\|u(t_n)\|_{L^\frac{2Np}{N+\alpha+2b}(|x|<R)}^2\to0.$$
Indeed, by the previous Lemma 
$$\|u(t_n)\|_{L^\frac{2Np}{N+\alpha+2b}(|x|<R)}\to0.$$
\end{proof}
\section{Scattering Criterion}
In this section one proves the next result. 
\begin{prop}\label{crt}
Take $(N,\alpha,b)$ satisfying $(\mathcal C)$. Let $p_*<p<p^*$ such that $p\geq2$. Let $u\in C(\R,H^2)$ be a global solution to \eqref{S}. Assume that 
$$0<\sup_{t\geq0}\|u(t)\|_{H^2}:=E<\infty.$$
There exist $R,\epsilon>0$ depending on $E,N,p,b,\alpha$ such that if
$$\liminf_{t\to+\infty}\int_{|x|<R}|u(t,x)|^2\,dx<\epsilon,$$
then, $u$ scatters for positive time. 
\end{prop}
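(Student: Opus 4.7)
The plan is to follow the Dodson--Murphy scheme \cite{dm}, adapted to the non-radial inhomogeneous Choquard setting by leveraging the decay of the weight $|x|^b$ in place of a radial Sobolev embedding. By standard Strichartz theory and the uniform bound $\sup_{t\ge0}\|u(t)\|_{H^2}\le E$, it suffices to establish
\[\|u\|_{L^q_tL^r_x([T,\infty))}<\infty\]
for some $s_c$-admissible pair $(q,r)$ and some $T>0$ to be chosen via the hypothesis. I would pick such a $T$ so that $\int_{|x|<R}|u(T,x)|^2\,dx<\epsilon$. For $t>T$, I write Duhamel starting at $T$ and decompose the data $u(T)=\phi_1+\phi_2$ with $\phi_1:=\psi_Ru(T)$ and $\phi_2:=(1-\psi_R)u(T)$; then $\|\phi_1\|\le\sqrt\epsilon$, $\|\phi_j\|_{H^2}\le E$, and $\mathrm{supp}\,\phi_2\subset\{|x|\ge R/2\}$.

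Sobolev interpolation yields $\|\phi_1\|_{\dot H^{s_c}}\lesssim E^{s_c/2}\epsilon^{(2-s_c)/4}$, so Proposition \ref{prop2}(3) gives
\[\|e^{i(\cdot-T)\Delta^2}\phi_1\|_{L^{q_1}_tL^{r_1}_x([T,\infty))}\lesssim E^{s_c/2}\epsilon^{(2-s_c)/4}=:\eta\ll1\]
for a suitable $s_c$-admissible pair $(q_1,r_1)$. The free evolution of $\phi_2$ is only bounded (by $E$) in the corresponding Strichartz norm; the missing smallness must be recovered from the weight $|x|^b$ inside the source term.

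For the Duhamel term, the strategy is to combine dual Strichartz with the Hardy--Littlewood--Sobolev estimates of Lemma \ref{hls}, especially part~3 which incorporates $|\cdot|^b$-weights. The key device is a spatial partition $\R^N=\{|x|\ge R_0\}\cup\{|x|<R_0\}$ with $R_0\gg 1$: on the exterior region $|x|^b\le R_0^b$ is small (since $b<0$), while on the interior $|x|^b$ is locally in $L^a$ for $a<N/(-b)$, so HLS applies cleanly. Inserting $u=e^{i(\cdot-T)\Delta^2}\phi_1+e^{i(\cdot-T)\Delta^2}\phi_2+(\text{Duhamel})$ in one factor of the nonlinearity, using $\eta$ to control the $\phi_1$-pieces and $R_0^b$ to control the $\phi_2$-pieces, I expect to obtain
\[X(t):=\|u\|_{L^q_tL^r_x([T,t])}\le A+B\,X(t)^{2p-1},\qquad A\lesssim E,\quad B\lesssim R_0^b+\eta,\]
on $[T,\infty)$. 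Since $X(T)=0$, taking $R_0$ large and $\epsilon$ small makes $B$ arbitrarily small and Lemma \ref{abs} with $\theta=2p-1>1$ then yields $X(t)\le\tfrac{\theta}{\theta-1}A$ uniformly, hence scattering.

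The hard part will be the nonlinear estimate in the third step: the $s_c$-admissible pairs $(q,r),(\tilde q,\tilde r)$ and the HLS exponents must be tuned so that $|x|^b$ is simultaneously locally $L^a$-integrable on $\{|x|<R_0\}$, yields a genuine smallness factor $R_0^b$ on $\{|x|\ge R_0\}$, and combines correctly with the $s_c$-Strichartz scaling. This balancing is the paper's substitute for the radial Sobolev embedding used in \cite{st4}, and it forces the dimensional restriction $N\ge5$. In the limiting case $b=0$ the factor $R_0^b=1$ provides no smallness, which is why that regime remains open by this method.
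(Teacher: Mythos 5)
There is a genuine gap, located exactly where you flag ``the hard part'': the contribution of the exterior piece $\phi_2=(1-\psi_R)u(T)$. In this scheme the smallness must sit in the \emph{constant} term of the bootstrap inequality, i.e.\ you need $\|e^{i(\cdot-T)\Delta^2}u(T)\|_{L^a([T,\infty),L^r)}$ itself to be small: the Duhamel contribution is always estimated (as in Lemma \ref{tch}) by $E^{\theta}X^{2p-1-\theta}$ with an order-one coefficient, since no factor $R_0^b$ or $\eta$ survives in the terms where every slot of the nonlinearity is occupied by the Duhamel piece. But $e^{i(\cdot-T)\Delta^2}\phi_2$ is only $O(E)$ in that Strichartz norm, and the weight $|x|^b$ cannot rescue it: the free bi-harmonic flow does not preserve the support $\{|x|\ge R/2\}$, so on the region $|x|<R_0$, where $|x|^b$ gives no gain, the evolution of $\phi_2$ is not small in any space-time norm over all of $[T,\infty)$. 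Consequently the claimed inequality $X\le A+BX^{2p-1}$ with $A\lesssim E$ and $B\lesssim R_0^b+\eta$ is not attainable; what one actually gets is $X\le \eta+CE+CE^{\theta}X^{2p-1-\theta}$, to which Lemma \ref{abs} does not apply once $E$ is large.

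The paper avoids the spatial decomposition of $u(T)$ altogether. It writes $e^{i(\cdot-T)\Delta^2}u(T)=e^{i\cdot\Delta^2}u_0+F_1+F_2$ by Duhamel from time $0$ and splits the \emph{time} integral at $T-\varepsilon^{-\beta}$: the distant past $F_1$ is small by the dispersive decay of $e^{i(t-s)\Delta^2}$ for $t-s\ge\varepsilon^{-\beta}$; for the recent past $F_2$ one splits the \emph{nonlinearity} (not the data) as $\psi_R\mathcal N+(1-\psi_R)\mathcal N$, controls $\|\psi_Ru(t)\|\lesssim\varepsilon$ on all of $[T-\varepsilon^{-\beta},T]$ via the almost conservation $|\frac{d}{dt}\int\psi_R|u|^2\,dx|\lesssim R^{-1}$ (this backward-in-time propagation of the mass evacuation is another ingredient absent from your proposal), and gains negative powers of $R$ from $|x|^b$ only on $\{|x|>R/2\}$, where the cut-off $1-\psi_R$ already localizes the integrand. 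This yields $\|e^{i(\cdot-T)\Delta^2}u(T)\|_{L^a((T,\infty),L^r)}\lesssim\varepsilon^{\mu}$, and then Lemma \ref{tch} combined with Lemma \ref{abs} closes the argument with the smallness in the constant term, as required. Your identification of the role of $|x|^b$ as a substitute for radial Sobolev embedding is correct in spirit, but it must be applied to the localized nonlinearity in the recent past, not to a spatially decomposed initial datum.
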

\begin{proof}
By Lemma \ref{bnd}, $u$ is bounded in $H^2$. Take $\epsilon>0$ near to zero and $R(\epsilon)>>1$ to be fixed later. Let us give a technical result \cite{st4}.
\begin{lem}\label{tch}
Let $(N,b,\alpha)$ satisfying $(\mathcal C)$ and $p_*<p<p^*$ satisfying $p\geq2$. Then, there exists $\theta\in(0,2p-1)$ such that the global solution to \eqref{S} satisfies
$$\|u-e^{i.\Delta}u_0\|_{S^{s_c}(I)}\lesssim \|u\|_{L^\infty(I,H^{2})}^\theta\|u\|^{2p-1-\theta}_{L^a(I,L^r)},$$
for certain $(a,r)\in\Gamma_{s_c}$.
\end{lem}
The following result is the key to prove the scattering criterion.
\begin{prop}\label{fn1}
Take $(N,\alpha,b)$ satisfying $(\mathcal C)$. Let $p_*<p<p^*$ such that $p\geq2$. Let $u_0\in H^2$ satisfying \eqref{ss}.
Then, for any $\varepsilon>0$, there exist $T,\mu>0$ such that the global solution to \eqref{S} satisfies
$$\|e^{i(\cdot-T)\Delta^2}u(T)\|_{L^a((T,\infty),L^r)}\lesssim \varepsilon^\mu.$$
\end{prop}
\begin{proof}
Let $0<\beta<<1$ and $T>\varepsilon^{-\beta}>0$. By the integral formula
\begin{eqnarray*}
e^{i(\cdot-T)\Delta^2}u(T)
&=&e^{i\cdot\Delta^2}u_0+i\int_0^Te^{i(\cdot-s)\Delta^2}[(I_\alpha*|\cdot|^b|u|^p)|x|^b|u|^{p-2}u]\,ds\\
&=&e^{i\cdot\Delta^2}u_0+i\Big(\int_0^{T-\varepsilon^{-\beta}}+\int_{T-\varepsilon^{-\beta}}^T\Big)e^{i(\cdot-s)\Delta^2}\mathcal N\,ds\\
&:=&e^{i\cdot\Delta^2}u_0+F_1+F_2.
\end{eqnarray*}
Take the real numbers
\begin{gather*}
a:=\frac{2(2p-\theta)}{2-s_c},\quad d:=\frac{2(2p-\theta)}{2+(2p-1-\theta)s_c};\\
r:=\frac{2N(2p-\theta)}{(N-2s_c)(2p-\theta)-4(2-s_c)}.
\end{gather*}
The condition $\theta=0^+$ gives 
$$ (a,r)\in \Gamma_{s_c},\quad (d,r)\in \Gamma_{-s_c}\quad\mbox{and}\quad (2p-1-\theta)d'=a.$$
$\bullet$ The linear term. Since $(a,\frac{Nr}{N+rs_c})\in\Gamma$, by Strichartz estimate and Sobolev injections, one has
\begin{eqnarray*}
\|e^{i\cdot\Delta^2}u_0\|_{L^a((T,\infty),L^r)}
&\lesssim&\||\nabla|^{s_c}e^{i\cdot\Delta^2}u_0\|_{L^a((T,\infty),L^{\frac{Nr}{N+rs_c}})}\lesssim\|u_0\|_{H^2}.
\end{eqnarray*}
$\bullet$ The term $F_2$. Using Hardy-Littlewood-Sobolev and H\"older inequalities, via the fact that $0<\psi_R<1$, one has 
\begin{eqnarray}
\|\psi_R\mathcal N\|_{r'}
&=&\|(I_\alpha*|\cdot|^b|u|^p)|\cdot|^b|u|^{p-2}\psi_Ru\|_{r'}\nonumber\\
&\lesssim&\|u\|_r^{2p-1-\theta}\|\psi_Ru\|_{r_1}^\theta,\label{use}
\end{eqnarray}
where
$$1+\frac{2b+\alpha}N=\frac{2p-\theta}r+\frac\theta{r_1}.$$
Thus,
\begin{eqnarray*}
N+\alpha+2b
&=&\frac{N(2p-\theta)}r+\frac{N\theta}{r_1}\\
&=&\frac{(N-2s_c)(2p-\theta)-4(2-s_c)}2+\frac{N\theta}{r_1}\\
&=&N+\alpha+2b+\frac{N\theta}{r_1}-\theta\frac{4+2b+\alpha}{2(p-1)}.
\end{eqnarray*}
Because $p_*<p<p^*$, one gets
$$2<r_1=\frac{2N(p-1)}{4+2b+\alpha}<\frac{2N}{N-4}.$$
So, denoting $I_2:=(T-\varepsilon^{-\beta},T)$, it follows that for $\lambda:=\lambda_N\in(0,1)$,
\begin{eqnarray*}
\|\psi_R\mathcal N\|_{L^{d'}(I_2,L^{r'})}
&\lesssim&\|u\|_{L^a(I_2,L^r)}^{2p-1-\theta}\|\psi_Ru\|_{r_1}^\theta\\
&\lesssim&|I_2|^{\frac{2p-1-\theta}a}\|u\|_{L^\infty(I_2,H^2)}^{2p-1-\theta}\|\psi_Ru\|_{r_1}^\theta\\
&\lesssim&|I_2|^{\frac{2p-1-\theta}a}\|\psi_Ru\|_{r_1}^\theta\\
&\lesssim&\varepsilon^{-\beta\frac{2p-1-\theta}a}\|\psi_Ru\|^{\theta\lambda}.
\end{eqnarray*}
Now, by the assumptions of the scattering criterion, one has
$$\int_{\R^N}\psi_R(x)|u(T,x)|^2\,dx<\epsilon^2.$$
Moreover, a computation with use of \eqref{S} and the properties of $\psi$ give
$$|\frac d{dt}\int_{\R^N}\psi_R(x)|u(t,x)|^2\,dx|\lesssim R^{-1}.$$
Then, for any $T-\varepsilon^{-\beta}\leq t\leq T$ and $R>\varepsilon^{-2-\beta}$, yields
$$\|\psi_Ru(t)\|\leq\Big( \int_{\R^N}\psi_R(x)|u(T,x)|^2\,dx+C\frac{T-t}R\Big)^\frac12\leq C\varepsilon.$$
Then, for small $\beta>0$, there exists $\eta>0$ such that
\begin{eqnarray*}
\|\psi_R\mathcal N\|_{L^{d'}(I_2,L^{r'})}
&\lesssim&\varepsilon^{-\beta\frac{2p-1-\theta}a}\|\psi_Ru\|^{\theta\lambda}\\
&\lesssim&\varepsilon^{\theta\lambda-\beta\frac{2p-1-\theta}a}\\
&\lesssim&\varepsilon^{\eta}.
\end{eqnarray*}
On the other hand, by Hardy-Littlewood-Sobolev inequality
\begin{eqnarray*}
\|(1-\psi_R)\mathcal N\|_{r'}
&=&\|(I_\alpha*|\cdot|^b|u|^p)|\cdot|^b|u|^{p-2}(1-\psi_R)u\|_{r'}\\
&\lesssim&\||x|^b\|_{L^{\mu_1}(|x|<1)}\|u\|_r^{2p-1-\theta}\|u\|_{\frac{2N}{N-4}}^\theta\||x|^b\|_{L^{\mu_2}(|x|>\frac R2)}\\
&+&\||x|^b\|_{L^{\mu}(|x|>1)}\|u\|_r^{2p-1-\theta}\|u\|^\theta\||x|^b\|_{L^{\mu}(|x|>\frac R2)}\\
&\lesssim&R^{N+b\mu_2}\|u\|_r^{2p-1-\theta}\|u\|_{r_1}^\theta+R^{N+b\mu}\|u\|_r^{2p-1-\theta}\|u\|^\theta,
\end{eqnarray*}
where
\begin{gather*}
N+b\mu<0;\\
N+b\mu_1>0;\\
N+b\mu_2<0;\\
1+\frac\alpha N=\frac{2p-\theta}r+\frac\theta2+\frac2{\mu};\\
1+\frac\alpha N=\frac{2p-\theta}r+\frac{\theta(N-4)}{2N}+\frac1{\mu_1}+\frac1{\mu_2}.
\end{gather*}
Compute
\begin{eqnarray*}
N+\alpha
&=&\frac{N(2p-\theta)}r+\frac{\theta N}{2}+\frac{2N}{\mu}\\
&=&\theta(s_c-\frac N2)-2(2-s_c)+2(p-1)(\frac N2-s_c)+2(\frac N2-s_c)+\frac{\theta N}{2}+\frac{2N}{\mu}\\
&=&\theta s_c-2(2-s_c)+4+2b+\alpha+2(\frac N2-s_c)+\frac{2N}{\mu}.
\end{eqnarray*}
Thus,
$$\mu=\frac{2N}{-2b-\theta s_c}>-\frac Nb.$$
Moreover,
\begin{eqnarray*}
N+\alpha
&=&\frac{N(2p-\theta)}r+\frac{\theta(N-4)}{2}+N(\frac1{\mu_1}+\frac1{\mu_2})\\
&=&\theta(s_c-\frac N2)-2(2-s_c)+2(p-1)(\frac N2-s_c)+2(\frac N2-s_c)+\theta(\frac{N}{2}-2)+N(\frac1{\mu_1}+\frac1{\mu_2})\\
&=&\theta(s_c-2)-2(2-s_c)+4+2b+\alpha+2(\frac N2-s_c)+N(\frac1{\mu_1}+\frac1{\mu_2})\\
&=&\theta(s_c-2)+2b+\alpha+N+N(\frac1{\mu_1}+\frac1{\mu_2}).
\end{eqnarray*}
Taking $\epsilon<\theta(2-s_c)$ and $\frac N{\mu_1}=-b+\epsilon$, one gets 
$$
\frac N{\mu_2}
=-b-\epsilon-\theta(s_c-2)>-b.
$$
Then,
\begin{eqnarray*}
\|(1-\psi_R)\mathcal N\|_{L^{d'}(I_2,L^{r'})}
&\lesssim&(R^{N+b\mu_2}+R^{N+b\mu})\|u\|_{L^a(I_2,L^r)}^{2p-1-\theta}\\
&\lesssim&(R^{N+b\mu_2}+R^{N+b\mu})|I_2|^{\frac{2p-1-\theta}{a}}\|u\|_{L^\infty(I_2,H^2)}^{2p-1-\theta}\\
&\lesssim&(R^{N+b\mu_2}+R^{N+b\mu})|I_2|^{\frac{2p-1-\theta}{a}}\\
&\lesssim&(R^{N+b\mu_2}+R^{N+b\mu})\epsilon^{-\beta\frac{2p-1-\theta}{a}}.
\end{eqnarray*}
Regrouping the above estimates and choosing $R>\max\{[\epsilon^{1+\beta\frac{2p-1-\theta}{a}}]^\frac{-1}{N+b\mu_2},[\epsilon^{1+\beta\frac{2p-1-\theta}{a}}]^\frac{-1}{N+b\mu}\}$, one gets for some $\lambda>0$,
\begin{eqnarray*}
\|F_2\|_{L^a((T,\infty),L^r)}
&\lesssim&\varepsilon^\lambda.
\end{eqnarray*}
$\bullet$ The term $F_1$. Following lines in \cite{lcg} via the estimate \eqref{use}, which gives $\|\mathcal N\|_{r'}\lesssim\|u\|_{H^2}^{2p-1}$, there is a positive real number denoted also by $\lambda>0$ such that one gets
\begin{eqnarray*}
\|F_1\|_{L^a((T,\infty),L^r)}
&\lesssim&\varepsilon^{\lambda}.
\end{eqnarray*}
The proof is closed via the three above points.
\end{proof}
Now, one proves the scattering criterion. Taking account of Duhamel formula, there exists $\mu>0$ such that
$$\|e^{i.\Delta^2}u(T)\|_{L^a((0,\infty),L^r)}=\|e^{i(.-T)\Delta^2}u(T)\|_{L^a((T,\infty),L^r)}\lesssim\epsilon^\mu.$$
So, with Lemma \ref{tch} via the absorption result Lemma \ref{abs}, one gets
$$\|u\|_{L^a((0,\infty),L^r)}<\infty.$$
With Lemma \ref{tch}, one gets for $u_+:=e^{-iT\Delta^2}u(T)+i\int_T^\infty e^{-is\Delta^2}\mathcal N\,ds$,
\begin{eqnarray*}
\|u(t)-e^{it\Delta^2}u_+\|_{H^2}
&=&\|\int_t^\infty e^{i(t-s)\Delta^2}\mathcal N\,ds\|_{H^2}\\
&\lesssim&\|u\|_{L^\infty((t,\infty),H^2)}^{\theta}\|u\|_{L^a((t,\infty),L^r)}^{2q-1-\theta}\\
&&\to0.
\end{eqnarray*}
This finishes the proof.
\end{proof}
\section{Scattering}
Theorem \ref{sctr} about the scattering of energy global solutions to the focusing problem \eqref{S} follows with Proposition \ref{crt} via Lemma \ref{evac1}.


\end{document}